\documentclass[12pt]{article}

\usepackage{amsmath,amssymb,amsthm}
\usepackage{booktabs}
\usepackage{geometry}
\usepackage{float}
\usepackage{hyperref}
\theoremstyle{plain}
\newtheorem{theorem}{Theorem}[section]
\newtheorem{corollary}[theorem]{Corollary}
\newtheorem{lemma}[theorem]{Lemma}

\theoremstyle{definition}
\newtheorem{definition}[theorem]{Definition}
\newtheorem{example}[theorem]{Example}

\theoremstyle{remark}

\newcommand{\Z}{\mathbb{Z}}

\title{Spectral Theory of the Toroidal 3D Queen Graph}
\author{
Mahesh Ramani\\
Independent researcher\\
1105 Timber Oaks Rd\\
Edison, New Jersey\\
\texttt{mahesh.ramani.iyer@gmail.com}
}
\date{}

\begin{document}

\maketitle

\begin{abstract}
We study the adjacency spectrum of the toroidal three-dimensional queen graph
$G_n$ on $(\Z_n)^3$. Since $G_n$ is a Cayley graph on an abelian group, its
adjacency matrix is diagonalized by Fourier characters. For each frequency
$a\in(\Z_n)^3$, the corresponding eigenvalue is $\lambda(a)=n\mu(a)-13$,
where $\mu(a)$ counts the queen directions orthogonal to $a$ modulo $n$. In
the generic odd case, meaning $n$ odd with $3\nmid n$, the possible values of
$\mu(a)$ are exactly $0,1,2,3,4,$ and $13$, and each multiplicity is given by
an explicit polynomial in $n$. The proof combines a geometric classification
of frequency points by orthogonality type with two global counting identities.
\end{abstract}

\section{Introduction}

The toroidal three-dimensional queen graph is the periodic analogue of the
finite 3D queen graph: instead of a bounded cube, one works on the torus
$(\Z_n)^3$, so translation symmetry is restored and Fourier methods become
available. This makes the adjacency spectrum explicitly computable (for context
on the properties of the corresponding two-dimensional and non-toroidal queen 
graphs, see \cite{cardoso2020, weakley2008}).

The purpose of this paper is to record that explicit description and then use
it to obtain exact multiplicity formulas in the generic odd case. Throughout
the paper, unless otherwise stated, we assume that $n$ is odd and that
$3\nmid n$. The two central results are the eigenvalue formula
\[
  \lambda(a)=n\mu(a)-13,
\]
valid in that regime, and the exact multiplicity theorem for the same regime.

The paper is organized as follows. Section~\ref{sec:graph} defines the toroidal
3D queen graph. Section~\ref{sec:eigenvalue} proves the eigenvalue formula and
derives the basic spectral corollaries. Section~\ref{sec:orbits} explains the
geometric structure behind the multiplicities. Section~\ref{sec:orbits} proves
the exact multiplicity theorem for odd $n$ with $3\nmid n$. Section~\ref{sec:examples}
gives a worked example, and Section~\ref{sec:remarks} closes with a short
discussion of the remaining arithmetic cases.

\section{The toroidal 3D queen graph}
\label{sec:graph}

\begin{definition}
Let $n\ge 1$. The \emph{toroidal 3D queen graph} $G_n$ has vertex set
$(\Z_n)^3$. In this paper we work in the generic odd regime, meaning that
$n$ is odd and $3\nmid n$; in that regime the moves listed below are distinct,
so $G_n$ is a simple graph. Let
\[
\begin{aligned}
U = \{ &(1,0,0),\,(0,1,0),\,(0,0,1),\,(1,1,0),\,(1,-1,0),\,(1,0,1), \,
(1,0,-1),\\
&(0,1,1),\,(0,1,-1),\,(1,1,1),\,(1,1,-1),\,(1,-1,1),\,(1,-1,-1)\},
\end{aligned}
\]
one representative from each opposite pair in $\{-1,0,1\}^3\setminus\{0\}$, so
$|U|=13$. Two distinct vertices $x,y\in(\Z_n)^3$ are adjacent if
\[
  y=x+tu
\]
for some $u\in U$ and some $t\in\{1,\ldots,n-1\}$, with arithmetic in
$(\Z_n)^3$.
\end{definition}

Equivalently, $G_n=\mathrm{Cay}((\Z_n)^3,S)$, where
\[
  S=\{tu\bmod n : u\in U,\ 1\le t\le n-1\}.
\]
Since $S$ is closed under negation modulo $n$, the graph is undirected.
Its translation symmetry is the reason Fourier analysis is effective.

\section{Fourier diagonalization and the eigenvalue formula}
\label{sec:eigenvalue}

For $a\in(\Z_n)^3$, define the orthogonality count
\[
  \mu(a):=\#\{u\in U : a\cdot u\equiv 0\pmod n\}.
\]
Let $\omega=e^{2\pi i/n}$, and define the Fourier character
\[
  \chi_a(x)=\omega^{a\cdot x},\qquad x\in(\Z_n)^3.
\]

\begin{theorem}[Eigenvalue formula]
\label{thm:eigenvalue}
Assume $n$ is odd and $3\nmid n$. For every $a\in(\Z_n)^3$, the character
$\chi_a$ is an eigenvector of the adjacency matrix $A$ of $G_n$ with
eigenvalue
\[
  \lambda(a)=n\mu(a)-13.
\]
\end{theorem}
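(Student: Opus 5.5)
The plan is the standard Cayley-graph computation. Since $G_n=\mathrm{Cay}((\Z_n)^3,S)$ with $S=-S$, for any character $\chi_a$ we have
\[
  (A\chi_a)(x)=\sum_{s\in S}\chi_a(x+s)=\Big(\sum_{s\in S}\omega^{a\cdot s}\Big)\chi_a(x).
\]
So $\chi_a$ is an eigenvector with eigenvalue $\sum_{s\in S}\omega^{a\cdot s}$. The whole task then reduces to evaluating this character sum over $S$.

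The first step is to check that $S$ is the disjoint union, over $u\in U$, of the sets $S_u=\{tu : 1\le t\le n-1\}$, with each $S_u$ of size exactly $n-1$. This gives $|S|=13(n-1)$ and lets the sum split as $\sum_{u\in U}\sum_{t=1}^{n-1}\omega^{t(a\cdot u)}$.

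This is where the hypotheses $n$ odd and $3\nmid n$ enter, and it is the main (though modest) obstacle.

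\emph{Injectivity within a line.} Each $u\in U$ has some coordinate equal to $\pm1$, so $tu\equiv t'u$ forces $t\equiv t'$. Hence each $S_u$ has $n-1$ elements, all nonzero.

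\emph{Disjointness of different lines.} Suppose $tu\equiv t'u'$ with $u\ne u'$ and $t,t'\not\equiv 0$. I would compare coordinates.
\begin{itemize}
\item If the supports of $u$ and $u'$ differ, some coordinate is $\pm t\equiv 0$ or $\pm t'\equiv 0$, which is a contradiction.
\item If the supports agree, write $t'\equiv \pm t$ using one shared nonzero coordinate. Since $u\neq \pm u'$, another shared coordinate then gives $t\equiv -t$ (when $t'\equiv t$) or $t\equiv t$ with a sign mismatch (when $t'\equiv -t$). Either way we get $2t\equiv 0$, which is impossible for $n$ odd.
\end{itemize}
I expect the careful case check here to be the only delicate point. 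It is where the definition's remark about moves being distinct comes from, and the condition $3\nmid n$ may not actually be needed for this step. Oddness already suffices for distinctness of lines, and I would note that.

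The second step is to evaluate each inner sum by the geometric-series identity. Writing $c=a\cdot u\in\Z_n$,
\[
  \sum_{t=1}^{n-1}\omega^{tc}=\sum_{t=0}^{n-1}\omega^{tc}-1=\begin{cases} n-1, & c\equiv 0,\\ -1, & c\not\equiv 0.\end{cases}
\]
Summing over the 13 directions, $\mu(a)$ of them contribute $n-1$ and the remaining $13-\mu(a)$ contribute $-1$. The eigenvalue is therefore
\[
  \mu(a)(n-1)-(13-\mu(a))=n\mu(a)-13.
\]
Finally, since the $n^3$ characters are orthogonal, they form an eigenbasis, though the statement only requires the eigenvector claim.
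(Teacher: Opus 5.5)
Your proof is correct and follows the paper's route: the Cayley-graph character computation, then the geometric-series evaluation of $\sum_{t=1}^{n-1}\omega^{t(a\cdot u)}$ as $n-1$ or $-1$, which gives $\mu(a)(n-1)-(13-\mu(a))=n\mu(a)-13$. Your explicit check that the sets $\{tu : 1\le t\le n-1\}$, $u\in U$, are pairwise disjoint is a worthwhile addition, since the paper only asserts this in the definition yet silently relies on it when it writes $A\chi_a$ as the double sum over $u$ and $t$. You are also right that oddness of $n$ alone suffices there (in both subcases the mismatched coordinate gives $t\equiv -t$, hence $2t\equiv 0$), so $3\nmid n$ plays no role in this theorem.
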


\begin{proof}
Since $G_n=\mathrm{Cay}((\Z_n)^3,S)$ is a Cayley graph on an abelian group,
its characters are simultaneous eigenvectors of $A$. For any $x\in(\Z_n)^3$,
\[
  (A\chi_a)(x)
  =\sum_{u\in U}\sum_{t=1}^{n-1}\chi_a(x+tu)
  =\chi_a(x)\sum_{u\in U}\sum_{t=1}^{n-1}\omega^{t(a\cdot u)}.
\]
For fixed $u$, let
\[
  S_u(a):=\sum_{t=1}^{n-1}\omega^{t(a\cdot u)}.
\]
If $a\cdot u\equiv 0\pmod n$, then $\omega^{a\cdot u}=1$ and
$S_u(a)=n-1$. If $a\cdot u\not\equiv 0\pmod n$, then
$r=\omega^{a\cdot u}\neq 1$ and $r^n=1$, so
\[
  1+r+r^2+\cdots+r^{n-1}=0,
\]
hence $S_u(a)=-1$. Among the 13 directions in $U$, exactly $\mu(a)$ satisfy
the first case and $13-\mu(a)$ satisfy the second. Therefore
\[
  (A\chi_a)(x)
  =\chi_a(x)\bigl[\mu(a)(n-1)-(13-\mu(a))\bigr]
  =\chi_a(x)\bigl[n\mu(a)-13\bigr],
\]
as claimed.
\end{proof}

\begin{corollary}
\label{cor:basic}
The spectrum of $G_n$ lies in
\[
  \{-13,\,-13+n,\,-13+2n,\ldots,-13+13n\},
\]
so at most $14$ distinct eigenvalues occur. Moreover, $G_n$ is
$13(n-1)$-regular, and $\lambda(0)=13(n-1)$.
\end{corollary}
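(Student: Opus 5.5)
The corollary follows from Theorem~\ref{thm:eigenvalue} together with the standard fact that the characters $\chi_a$, $a\in(\Z_n)^3$, form a basis of $\mathbb{C}^{(\Z_n)^3}$. They are $n^3$ pairwise orthogonal nonzero vectors, by the usual orthogonality relation $\sum_x \chi_a(x)\overline{\chi_b(x)} = n^3\delta_{ab}$.

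The plan has three steps.

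\textbf{The spectrum.} Since the characters diagonalize $A$, the spectrum of $G_n$ (with multiplicities) is exactly the multiset $\{\lambda(a) : a\in(\Z_n)^3\}$. By the theorem, $\lambda(a)=n\mu(a)-13$. By definition $\mu(a)$ is an integer with $0\le \mu(a)\le |U|=13$. Hence every eigenvalue lies in $\{-13+kn : 0\le k\le 13\}$, a set of at most $14$ values.

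\textbf{The value at $0$.} For $a=0$, every $u\in U$ satisfies $a\cdot u\equiv 0$, so $\mu(0)=13$ and $\lambda(0)=13n-13=13(n-1)$.

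\textbf{Regularity.} The graph is a Cayley graph, so it is $|S|$-regular. It remains to check that $|S|=13(n-1)$, i.e.\ that the $13(n-1)$ elements $tu \bmod n$ with $u\in U$ and $1\le t\le n-1$ are pairwise distinct and nonzero. This step uses the hypothesis ($n$ odd, $3\nmid n$); the paper asserts distinctness in the definition, and I would verify it as follows.

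First, $tu\not\equiv 0$: each $u$ has a coordinate equal to $1$, so $tu\equiv 0$ would force $t\equiv 0$.

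Second, suppose $tu\equiv su'$. Compare the zero patterns. If a coordinate of $u$ is $0$ while the corresponding coordinate of $u'$ is $\pm1$, then $s\equiv 0$, a contradiction. So $u$ and $u'$ have the same support.

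Third, on a common coordinate where both entries are $1$ (every $u\in U$ has its first nonzero coordinate equal to $1$), we get $t\equiv s$. Any other coordinate then gives $t\,\epsilon\equiv t\,\epsilon'$ with $\epsilon,\epsilon'\in\{\pm1\}$. If $\epsilon\ne\epsilon'$, this becomes $2t\equiv0$, which is impossible since $n$ is odd and $t\not\equiv 0$. So $u=u'$ and $t=s$.

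Alternatively, one can use the regularity consistency check: the degree of a Cayley graph equals the eigenvalue at the trivial character, which is $\lambda(0)=13(n-1)$. This holds once we know the sum in the theorem's proof counts each neighbor once, which is exactly the distinctness above.

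The only nontrivial point is this distinctness of the generating set. Everything else is immediate from Theorem~\ref{thm:eigenvalue}.
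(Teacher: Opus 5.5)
Your proof is correct and follows the paper's argument: you read off $0\le\mu(a)\le 13$ from Theorem~\ref{thm:eigenvalue} and evaluate at $a=0$ to get $\lambda(0)=13(n-1)$, the degree. You also make explicit two points the paper leaves implicit, and your verification of both is sound: that the $n^3$ characters form a complete eigenbasis, and that the $13(n-1)$ moves $tu$ are distinct and nonzero, which needs only $n$ odd and is also what justifies the sum in the theorem's proof.
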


\begin{proof}
The eigenvalue formula shows that $\lambda(a)=n\mu(a)-13$ with
$0\le \mu(a)\le 13$. Taking $a=0$ gives $\mu(0)=13$, so
$\lambda(0)=13(n-1)$, which equals the degree.
\end{proof}

\begin{corollary}
\label{cor:min}
If $n>13$, then the minimum eigenvalue of $G_n$ is $-13$.
\end{corollary}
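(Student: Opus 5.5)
By Theorem~\ref{thm:eigenvalue}, every eigenvalue has the form $n\mu(a)-13$ with $\mu(a)\ge 0$. Since $n>13>0$, the eigenvalues are increasing in $\mu$, so the smallest conceivable eigenvalue is $-13$, attained exactly when $\mu(a)=0$. It therefore suffices to exhibit one frequency $a\in(\Z_n)^3$ with $a\cdot u\not\equiv 0\pmod n$ for all thirteen $u\in U$. Such a frequency shows that $-13$ is actually in the spectrum, and it is then the minimum.

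For the witness I will take $a=(1,3,9)$. The dot products with the thirteen directions are the signed sums $\pm1\pm3\pm9$ with each coordinate possibly omitted. Up to sign these are
\[
1,3,9,\;4,2,\;10,8,\;12,6,\;13,5,\;7,11,
\]
which are exactly the integers $1,\ldots,13$. This is the balanced ternary representation: every integer with $|m|\le 13$ is uniquely $\epsilon_1+3\epsilon_2+9\epsilon_3$ with $\epsilon_i\in\{-1,0,1\}$. Each value $a\cdot u$ is thus a nonzero integer of absolute value at most $13$. Since $n>13$, none of them is divisible by $n$, so $\mu(a)=0$ and $\lambda(a)=-13$.

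The main thing to get right is choosing a witness whose thirteen dot products avoid $0$ modulo $n$ uniformly for every $n>13$. The balanced-ternary choice handles this at once, because the hypothesis $n>13$ is exactly what prevents any reduction to zero. The proof also does not use the standing assumptions that $n$ is odd and $3\nmid n$, beyond their role in Theorem~\ref{thm:eigenvalue}. The remaining verification of the thirteen values is routine.
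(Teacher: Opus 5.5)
Your proof is correct, but it handles the existence step differently from the paper. The paper's argument is nonconstructive. Each $H_u$ is a subgroup of size $n^2$, so the union of the 13 hyperplanes has at most $13n^2<n^3$ points when $n>13$, and some $a$ must lie in none of them.

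You instead give the explicit witness $a=(1,3,9)$. Its dot products with the elements of $U$, in the listed order, are $1,3,9,4,-2,10,-8,12,-6,13,-5,7,-11$, which are exactly $\pm1,\dots,\pm13$. So none of them vanishes modulo any $n\ge 14$.

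The counting argument needs no computation and carries over to any direction set with too few hyperplanes to cover the group. Your witness gives an explicit eigenvector $\chi_{(1,3,9)}$ for $-13$ that works for every $n\ge14$ at once. Its balanced-ternary structure also shows why $13$ is the threshold for this particular witness: it fails at $n=13$ because $(1,3,9)\cdot(1,1,1)=13$.

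Neither argument is sharp in the admissible range. The union bound improves to $n\ge 13$ once you note that all $H_u$ share the origin, since $13(n^2-1)+1<n^3$ there. Theorem~\ref{thm:mult} gives $M_0(n)>0$ already for $n=11$ and $n=13$.
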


\begin{proof}
For each $u\in U$, the condition $a\cdot u\equiv 0\pmod n$ defines a
subgroup of $(\Z_n)^3$ of size $n^2$. The union of the 13 such subgroups
has size at most $13n^2<n^3$ when $n>13$, so there exists
$a\in(\Z_n)^3$ satisfying none of the 13 conditions. For such $a$,
$\mu(a)=0$, hence $\lambda(a)=-13$. Since $\lambda(a)=n\mu(a)-13$ with
$\mu(a)\ge 0$, no eigenvalue can lie below $-13$.
\end{proof}

\section{Orbit types and the structure of multiplicities}
\label{sec:orbits}

The exact multiplicity theorem rests on a symmetry-reduced classification of
frequency points according to how many orthogonality conditions they satisfy.
The central objects are the hyperplanes
\[
  H_u:=\{a\in(\Z_n)^3 : a\cdot u\equiv 0\pmod n\},
  \qquad u\in U.
\]
Then $\mu(a)$ is the number of hyperplanes $H_u$ containing $a$.

\begin{lemma}[Prototype lines, explicit orbit check]
\label{lem:prototypes}
Assume $n$ is odd and $3\nmid n$. If $a\neq 0$ and $\mu(a)\ge 2$, then
$a$ lies on one of the four cyclic submodules generated by
\[
  (1,0,0),\qquad (1,1,0),\qquad (1,1,1),\qquad (1,1,2),
\]
up to permutation of coordinates and independent sign changes.

Equivalently, the nonzero points with $\mu(a)\ge 2$ lie on exactly 25 lines:
3 axis lines, 6 face-diagonal lines, 4 space-diagonal lines, and 12 skew
lines.
\end{lemma}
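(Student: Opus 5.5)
The plan is to reduce the condition $\mu(a)\ge 2$ to a statement about pairs of hyperplanes, show that each pairwise intersection $H_u\cap H_v$ is a single cyclic submodule generated by a reduced cross product, and then enumerate those cross products. Suppose $a\ne 0$ and $\mu(a)\ge 2$. Then $a\in H_u\cap H_v$ for two distinct $u,v\in U$. So it suffices to prove that for every pair $u\ne v$ in $U$,
\[
H_u\cap H_v=\langle p_{u,v}\rangle, \qquad p_{u,v}:=\frac{u\times v}{\gcd(u\times v)},
\]
and then to check that every $p_{u,v}$ is, up to permutation and signs, one of $(1,0,0),(1,1,0),(1,1,1),(1,1,2)$.

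For the first step, let $M$ be the $2\times 3$ integer matrix with rows $u$ and $v$. Then $H_u\cap H_v$ is the kernel of $M$ acting on $(\Z_n)^3$. The $2\times2$ minors of $M$ are the coordinates of $u\times v$. Since $u,v\in\{-1,0,1\}^3$, each minor lies in $\{0,\pm1,\pm2\}$, and the minors are not all zero because $u,v$ are non-parallel. Hence the Smith normal form of $M$ is $\mathrm{diag}(1,d)$ with $d\in\{1,2\}$. As $n$ is odd, $d$ is a unit mod $n$, so the kernel is a free $\Z_n$-module of rank one, of size $n$. It contains $p_{u,v}$, which is a primitive integer vector and therefore has order exactly $n$ in $(\Z_n)^3$. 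So the kernel is exactly $\langle p_{u,v}\rangle$.

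The second step is a finite check over the $\binom{13}{2}=78$ pairs, organized by orbits under the signed permutation group, which preserves $U$ up to sign. Representative computations:
\[
(1,1,0)\times(1,-1,0)\ \text{gives}\ (0,0,1),\qquad (1,1,-1)\times(1,-1,1)\ \text{gives}\ (0,1,1),
\]
\[
(1,1,0)\times(0,1,1)=(1,-1,1),\qquad (1,1,1)\times(1,-1,0)=(1,1,-2).
\]
Since every minor has absolute value at most $2$, a reduced cross product can only have coordinates $0,\pm1,\pm2$. I expect the full enumeration to show that exactly the four listed types occur, and that each of the 25 representative lines actually arises as some $p_{u,v}$. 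The count is then: 3 axis lines; $\binom32\cdot 2=6$ face-diagonal lines; $4$ space-diagonal lines; and for type $(1,1,2)$, 3 choices for the position of the $2$ times $4$ sign patterns modulo overall sign, giving 12 skew lines. The total is $25$.

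The main obstacle is the word ``exactly''. I must show that the 25 cyclic submodules are pairwise distinct mod $n$, and this is where $3\nmid n$ enters: for example $(1,1,1)\equiv(1,1,-2)\pmod 3$. Two primitive generators $p,q$ give the same submodule iff $q\equiv cp$ for a unit $c$, which forces every $2\times2$ minor of $(p,q)$ to vanish mod $n$. I will list the nonzero minors occurring among the 25 representatives. Their coordinates are bounded by $2$, so each minor has absolute value at most $4$; up to sign the values lie in $\{1,2,3,4\}$, with $3$ realized, e.g. by $(1,1,1)$ and $(1,1,-2)$. Since $n$ is odd and $3\nmid n$ (with $n\ge5$; the cases $n=1$ and the degenerate sizes are handled separately), none of these minors is divisible by $n$. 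Hence distinct representatives give distinct lines, which completes the proof.
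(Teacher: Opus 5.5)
Your Smith normal form step is correct, and it is a clean uniform replacement for the paper's case-by-case solving of congruences. Since $d_1=1$ and $d_1d_2=\gcd$ of the minors $\in\{1,2\}$, which is a unit for odd $n$, you really do get $H_u\cap H_v=\langle p_{u,v}\rangle$ with exactly $n$ elements, even for composite $n$. The gap is that the step carrying the content of the lemma is only announced (``I expect the full enumeration to show\ldots''). That step is the claim that every $p_{u,v}$ is of one of the four listed types. Your remark that reduced cross products have coordinates in $\{0,\pm1,\pm2\}$ does not settle it: up to signed permutation this leaves six primitive types, including $(1,2,0)$ and $(1,2,2)$, and nothing in the proposal excludes these two. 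The paper closes this by explicitly solving the system for a representative of every orbit of pairs.

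Within your framework a short argument suffices. $p_{u,v}$ is orthogonal over $\Z$ to the two non-parallel vectors $u,v\in\{-1,0,1\}^3$. For $(1,2,0)$, the equation $w_1+2w_2=0$ forces $w_1=w_2=0$, leaving only $\pm(0,0,1)$. For $(1,2,2)$, the equation $w_1+2w_2+2w_3=0$ forces $w_1=0$ and $w_2=-w_3$, leaving only $\pm(0,1,-1)$. The same holds for all signed permutations of these types. Combined with your four sample cross products and the transitivity of signed permutations on each of the four families, this also shows that all 25 lines actually occur.

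The distinctness step rests on a false bound. Coordinates of size at most $2$ only bound the minors by $8$, and among the 25 generators the value $5$ does occur. For the skew generators $p=(2,1,1)$ and $q=(-1,2,1)$ one gets $p\times q=(-1,-3,5)$. So at $n=5$, which is inside the lemma's range and is the paper's worked example, your criterion ``no nonzero minor is divisible by $n$'' fails. The argument as written does not separate these two lines; they are distinct only because of the other minors $-1$ and $-3$.

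A correct replacement goes as follows. For distinct generators $p\neq\pm q$ we have $|p\times q|^2=|p|^2|q|^2-(p\cdot q)^2$. Also $|p|^2|q|^2\le 18$ unless both are skew, in which case it equals $36$. Suppose all three minors were divisible by an odd $m\ge5$. Being not all zero, they give $|p\times q|^2\ge 25$, which forces both generators to be skew. It then forces exactly one minor to be $\pm5$ and the others $0$, hence $(p\cdot q)^2=11$, which is impossible.

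Taking $m$ to be any divisor $>1$ of $n$ (which is $\ge5$ since $\gcd(n,6)=1$), the same computation shows the lines meet only at $0$. That stronger property is what the count $25(n-1)$ in Theorem~\ref{thm:mult} needs, and it is where $3\nmid n$ really enters. For $n=9$ the distinct lines $\langle(1,1,1)\rangle$ and $\langle(1,1,-2)\rangle$ share $(3,3,3)$. Mere distinctness, which is where you attribute the hypothesis, already holds for every odd $n\ge5$.
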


\begin{proof}
Let $B_3$ denote the group of signed coordinate permutations of
$(\Z_n)^3$. This group preserves both the set $U$ and the condition
$a\cdot u\equiv 0\pmod n$. Therefore it suffices to classify unordered pairs
$\{u,v\}\subset U$ up to the action of $B_3$ and swapping of the two
vectors.

A direct orbit computation shows that the 78 unordered pairs from $U$ split
into 14 orbits. One convenient set of orbit representatives is listed in
Table~\ref{tab:pair-orbits}. In each row, solving the two linear congruences
gives a one-dimensional solution space, and the resulting line is shown in the
last column.

\begin{table}[H]
\centering
\small
\setlength{\tabcolsep}{4pt}
\begin{tabular}{@{}p{0.36\linewidth}p{0.50\linewidth}@{}}
\toprule
Representative pair & Solving $a\cdot u\equiv a\cdot v\equiv 0\pmod n$ gives \\
\midrule
$\{(1,1,1),(1,1,0)\}$ &
$\begin{array}{l}
a_1+a_2+a_3\equiv 0\\
a_1+a_2\equiv 0
\end{array}
\Rightarrow \langle(1,-1,0)\rangle$ \\[1.0ex]

$\{(1,1,1),(1,1,-1)\}$ &
$\begin{array}{l}
a_1+a_2+a_3\equiv 0\\
a_1+a_2-a_3\equiv 0
\end{array}
\Rightarrow \langle(1,-1,0)\rangle$ \\[1.0ex]

$\{(1,1,1),(1,0,0)\}$ &
$\begin{array}{l}
a_1+a_2+a_3\equiv 0\\
a_1\equiv 0
\end{array}
\Rightarrow \langle(0,1,-1)\rangle$ \\[1.0ex]

$\{(1,1,1),(1,0,-1)\}$ &
$\begin{array}{l}
a_1+a_2+a_3\equiv 0\\
a_1-a_3\equiv 0
\end{array}
\Rightarrow \langle(1,-2,1)\rangle$ \\[1.0ex]

$\{(1,1,1),(1,-1,-1)\}$ &
$\begin{array}{l}
a_1+a_2+a_3\equiv 0\\
a_1-a_2-a_3\equiv 0
\end{array}
\Rightarrow \langle(0,1,-1)\rangle$ \\[1.0ex]

$\{(1,1,1),(0,0,-1)\}$ &
$\begin{array}{l}
a_1+a_2+a_3\equiv 0\\
a_3\equiv 0
\end{array}
\Rightarrow \langle(1,-1,0)\rangle$ \\[1.0ex]

$\{(1,1,1),(0,-1,-1)\}$ &
$\begin{array}{l}
a_1+a_2+a_3\equiv 0\\
a_2+a_3\equiv 0
\end{array}
\Rightarrow \langle(0,1,-1)\rangle$ \\[1.0ex]

$\{(1,1,0),(1,0,1)\}$ &
$\begin{array}{l}
a_1+a_2\equiv 0\\
a_1+a_3\equiv 0
\end{array}
\Rightarrow \langle(1,-1,-1)\rangle$ \\[1.0ex]

$\{(1,1,0),(1,0,0)\}$ &
$\begin{array}{l}
a_1+a_2\equiv 0\\
a_1\equiv 0
\end{array}
\Rightarrow \langle(0,0,1)\rangle$ \\[1.0ex]

$\{(1,1,0),(1,-1,0)\}$ &
$\begin{array}{l}
a_1+a_2\equiv 0\\
a_1-a_2\equiv 0
\end{array}
\Rightarrow \langle(0,0,1)\rangle$ \\[1.0ex]

$\{(1,1,0),(0,0,1)\}$ &
$\begin{array}{l}
a_1+a_2\equiv 0\\
a_3\equiv 0
\end{array}
\Rightarrow \langle(1,-1,0)\rangle$ \\[1.0ex]

$\{(1,1,0),(0,-1,1)\}$ &
$\begin{array}{l}
a_1+a_2\equiv 0\\
-a_2+a_3\equiv 0
\end{array}
\Rightarrow \langle(1,-1,-1)\rangle$ \\[1.0ex]

$\{(1,1,0),(0,-1,0)\}$ &
$\begin{array}{l}
a_1+a_2\equiv 0\\
a_2\equiv 0
\end{array}
\Rightarrow \langle(0,0,1)\rangle$ \\[1.0ex]

$\{(1,0,0),(0,1,0)\}$ &
$\begin{array}{l}
a_1\equiv 0\\
a_2\equiv 0
\end{array}
\Rightarrow \langle(0,0,1)\rangle$ \\
\bottomrule
\end{tabular}
\caption{The 14 unordered-pair orbits in $U$ under signed coordinate
permutations and swapping of the two directions.}
\label{tab:pair-orbits}
\end{table}

Thus every nonzero point $a$ with $\mu(a)\ge 2$ lies on one of the four
prototype families. The four families are pairwise distinct because their
coordinate patterns are mutually exclusive: axis lines have two zero
coordinates; face-diagonal lines have exactly one zero coordinate; space-
diagonal lines have no zero coordinates and all coordinates equal up to sign;
and skew lines have no zero coordinates and a coordinate ratio $2$ appears,
which cannot coincide with the other three patterns when $n$ is odd and
$3\nmid n$. Since $n\ge 5$, each cyclic submodule contains exactly $n$ points.
Therefore the 25 lines listed above are distinct and account for all points
with $\mu(a)\ge 2$.
\end{proof}

\begin{theorem}[Exact multiplicities in the generic odd case]
\label{thm:mult}
Let $n\ge 5$ be odd with $3\nmid n$. The only values of $\mu(a)$ that can
occur are
\[
  \mu(a)\in\{0,1,2,3,4,13\}.
\]
The corresponding multiplicities are
\[
  M_{13}(n)=1,\qquad
  M_4(n)=9(n-1),\qquad
  M_3(n)=4(n-1),\qquad
  M_2(n)=12(n-1),
\]
\[
  M_1(n)=13n^2-72n+59,\qquad
  M_0(n)=n^3-13n^2+47n-35.
\]
Equivalently, the adjacency eigenvalues are
\[
  \lambda_k=kn-13,\qquad k\in\{0,1,2,3,4,13\},
\]
with the multiplicities above. For $n=5$ and $n=7$, the formula gives
$M_0(n)=0$; for every admissible odd $n>7$, all six multiplicities are
positive.
\end{theorem}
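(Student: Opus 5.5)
Using Lemma~\ref{lem:prototypes}, the plan is to compute $\mu$ on each of the 25 lines, subtract the line points from the total, and then pin down $M_0$ and $M_1$ with two global counting identities.

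\textbf{Step 1: $\mu$ on each line.} By Lemma~\ref{lem:prototypes}, every nonzero $a$ with $\mu(a)\ge 2$ lies on one of the 25 lines, and these lines meet pairwise only at $0$. Since $\mu$ is invariant under $B_3$, it suffices to evaluate $\mu(ta)$ for a generator $a$ of each prototype line and for $t$ a unit. For $t\not\equiv 0$ we have $t\,a\cdot u\equiv 0$ if and only if $a\cdot u\equiv 0$ when $t$ is a unit. When $t$ is a nonunit, $ta$ may satisfy extra congruences; to handle this I would first treat $n$ prime and then address composite $n$.

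For the generators we count orthogonal directions directly:
\begin{itemize}
\item $(0,0,1)$ is orthogonal to $(1,0,0),(0,1,0),(1,1,0),(1,-1,0)$, so $\mu=4$.
\item $(1,-1,0)$ is orthogonal to $(0,0,1),(1,1,0),(1,1,1),(1,1,-1)$, so $\mu=4$.
\item $(1,1,1)$ is orthogonal to $(1,-1,0),(1,0,-1),(0,1,-1)$, so $\mu=3$.
\item $(1,-2,1)$ is orthogonal to $(1,0,-1),(1,1,1)$, so $\mu=2$. Any third orthogonal direction would force a relation modulo $2$ or $3$, which is excluded because $n$ is odd and $3\nmid n$.
\end{itemize}
Multiplying by the line counts $3,6,4,12$, each contributing $n-1$ nonzero points, gives $M_4=9(n-1)$, $M_3=4(n-1)$ and $M_2=12(n-1)$. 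Together with $M_{13}=1$ from $a=0$, this also shows that no value of $\mu$ between $5$ and $12$ occurs.

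\textbf{Step 2: $M_0$ and $M_1$.} We need two identities. The first is the total count
\[
\textstyle\sum_k M_k=n^3 .
\]
The second is the incidence count $\sum_a \mu(a)=\sum_u |H_u|=13n^2$. Subtracting the known contributions gives
\[
M_1 = 13n^2 - 13 - 4\cdot 9(n-1) - 3\cdot 4(n-1) - 2\cdot 12(n-1) = 13n^2-72n+59 .
\]
Then
\[
M_0 = n^3 - 1 - 25(n-1) - M_1 = n^3-13n^2+47n-35 .
\]
The eigenvalue form of the statement then follows from Theorem~\ref{thm:eigenvalue}.

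\textbf{Step 3: Positivity.} The cubic factors as
\[
M_0 = n^3-13n^2+47n-35=(n-1)(n-5)(n-7),
\]
so $M_0=0$ exactly at $n=5,7$ and $M_0>0$ for $n>7$. For $n\ge 5$ we have $M_1>0$, since $13n^2-72n+59$ has roots $1$ and $59/13$.

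\textbf{Main obstacle.} The delicate point is composite $n$, where a nonunit multiple $ta$ might lie in extra hyperplanes. For example, for $a=(1,-2,1)$, $ta$ lies in $H_{(1,0,0)}$ whenever $t\equiv 0$, but it could also satisfy $3t\equiv 0$ or $2t\equiv 0$. Because $n$ is coprime to $6$, every relevant coefficient lies in $\{\pm1,\pm2,\pm3,\pm4\}$, whose prime factors are only $2$ and $3$. So $t\cdot c\equiv 0$ with such a $c$ forces $t\equiv 0$. I will verify this by checking the full list of values $u\cdot a$ for each generator, all of which are among $0,\pm1,\pm2,\pm3,\pm4$.
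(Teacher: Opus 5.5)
Your proposal is correct and follows essentially the same route as the paper. You use Lemma~\ref{lem:prototypes} to place all nonzero points with $\mu\ge 2$ on the 25 lines, read off $\mu=4,4,3,2$ on the four line types, and recover $M_1$ and $M_0$ from the total count $n^3$ and the incidence identity $\sum_a\mu(a)=13n^2$. Your version is slightly more complete than the paper's on two points: you justify that nonunit multiples $ta$ pick up no extra orthogonal directions (every nonzero $u\cdot a$ lies in $\{\pm1,\dots,\pm4\}$ and is therefore a unit when $\gcd(n,6)=1$), and you prove the positivity claim via $M_0=(n-1)(n-5)(n-7)$ and the roots $1$ and $59/13$ of $M_1$.
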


\begin{proof}
By Theorem~\ref{thm:eigenvalue}, the eigenvalue depends only on $\mu(a)$, so
it suffices to count frequency points by orthogonality type.

\smallskip\noindent\textbf{Step 1: the points with $\mu(a)\ge 2$.}
By Lemma~\ref{lem:prototypes}, the nonzero points with $\mu(a)\ge 2$ lie on
25 distinct lines. Of these, the 3 axis lines and 6 face-diagonal lines have
$\mu(a)=4$, the 4 space-diagonal lines have $\mu(a)=3$, and the 12 skew
lines have $\mu(a)=2$. Each line contains exactly $n-1$ nonzero points.
Therefore
\[
  M_4(n)=9(n-1),\qquad
  M_3(n)=4(n-1),\qquad
  M_2(n)=12(n-1).
\]
The origin satisfies all 13 orthogonality conditions, so $M_{13}(n)=1$.

\smallskip\noindent\textbf{Step 2: the remaining multiplicities.}
The total number of frequencies is $n^3$, so
\[
  M_1(n)+M_0(n)
  =n^3-\bigl(1+9(n-1)+4(n-1)+12(n-1)\bigr)
  =n^3-25n+24.
\]
On the other hand,
\[
  \sum_{a\in(\Z_n)^3}\mu(a)=13n^2,
\]
because each of the 13 hyperplanes $H_u$ has exactly $n^2$ points. Using the
values already found,
\[
  13+4\cdot 9(n-1)+3\cdot 4(n-1)+2\cdot 12(n-1)+M_1(n)=13n^2.
\]
Hence
\[
  M_1(n)=13n^2-72n+59.
\]
Subtracting from the total count gives
\[
  M_0(n)=n^3-13n^2+47n-35.
\]
This proves the theorem.
\end{proof}
\section{Example}
\label{sec:examples}

\begin{example}[The case $n=5$]
Since $M_0(5)=0$, the eigenvalue $-13$ does not occur. The spectrum consists
of five eigenvalue families:
\[
\begin{array}{c|c|c}
\mu & \lambda=5\mu-13 & \text{multiplicity} \\
\hline
13 & 52 & 1 \\
4 & 7 & 36 \\
3 & 2 & 16 \\
2 & -3 & 48 \\
1 & -8 & 24
\end{array}
\]
The total is $1+36+16+48+24=125=5^3$, and the weighted sum of the $\mu$-values
is $13+144+48+96+24=325=13\cdot 25$, in agreement with
\[
  \sum_{a\in(\Z_5)^3}\mu(a)=13\cdot 5^2.
\]
For consistency, the sum of the adjacency eigenvalues is
\[
  52+36\cdot 7+16\cdot 2+48\cdot(-3)+24\cdot(-8)=0.
\]
\end{example}

\section{Concluding remarks}
\label{sec:remarks}

The generic odd toroidal 3D queen graph has a remarkably sparse spectrum.
In the case covered by Theorem~\ref{thm:mult}, only six eigenvalue families
occur, and their multiplicities are explicit polynomials in $n$. The spectrum
is therefore arithmetic as well as geometric: the eigenvalues form an
arithmetic progression with common difference $n$, while the multiplicities
reflect the decomposition of the frequency domain into orbit types under
coordinate permutations and sign changes.

The remaining arithmetic cases require a finer stratification of the frequency
domain. They are natural sequel problems, but they are not needed for the
generic odd classification proved here.

\end{document}